\documentclass[review]{elsarticle}

\usepackage{lineno,hyperref}
\usepackage{amsfonts,amssymb}
\usepackage{amsmath,amsthm}
\usepackage{graphicx}
\usepackage{enumerate}
\usepackage[algoruled]{algorithm2e}

\newtheorem{theorem}{Theorem}
\newtheorem{lemma}{Lemma}
\newtheorem{corollary}{Corollary}
\newtheorem{proposition}{Proposition}
\newtheorem{definition}{Definition}










\bibliographystyle{elsarticle-num}

\begin{document}

\begin{frontmatter}

\title{$D$-Magic Strongly Regular Graphs}

\author{Rinovia Simanjuntak}
\address{Combinatorial Mathematics Research Group, Institut Teknologi Bandung, Indonesia}
\ead{rino@math.itb.ac.id}

\author{Palton Anuwiksa}
\address{Master Program in Mathematics, Institut Teknologi Bandung, Indonesia}
\ead{anuwiksapalton@gmail.com}

\begin{abstract}
For a set of distances $D$, a graph $G$ on $n$ vertices is said to be $D$-magic if there exists a bijection $f:V\rightarrow \{1,2, \ldots , n\}$ and a constant $k$ such that for any vertex $x$, $\sum_{y\in N_D(x)} f(y) = k$, where $N_D(x)=\{y|d(x,y)=i, i\in D\}$ is the $D$-neighbourhood set of $x$. 	 
In this paper we utilize spectra of graphs to characterize strongly regular graphs which are $D$-magic, for all possible distance sets $D$. In addition, we provide necessary conditions for distance regular graphs of diameter 3 to be $\{1\}$-magic.
\end{abstract}

\begin{keyword}
$D$-magic labeling \sep distance magic labeling \sep closed distance magic labeling \sep strongly regular graph \sep distance regular graph
\MSC[2010] 05C12 \sep 05C78
\end{keyword}

\end{frontmatter}

\section{Introduction}\label{intro}

The notion of distance magic labeling was introduced in the PhD thesis of Vilfred \cite{Vi} in 1994. A {\em distance magic labeling} of a graph $G$ on $n$ vertices is a bijection $f:V(G) \rightarrow \{1,2, \ldots, n\}$ with the property that there exists a \emph{magic constant} $k$ such that at any vertex $x$, its \emph{weight} $w(x) = \sum_{y\in N(x)} f(y) = k$, where $N(x)$ is the open neighborhood of $x$, i.e., the set of vertices adjacent to $x$. A graph admitting a distance magic labeling is then called  \emph{distance magic}. In 1999, Jinah \cite{Ji99} introduced a variation of distance magic labeling called the \emph{closed distance magic labeling}, where the weight of a vertex $x$ is summed over the closed neighborhood of $x$, i.e., the set containing $x$ and all vertices adjacent to $x$. O'Neal and Slater \cite{OS13} then generalized both aforementioned labelings by introducing the $D$-magic labeling, where $D$ is a set of distances.

\begin{definition}
Let $G$ be a graph on $n$ vertices, with diameter $d$. Let $D \subseteq \{0,1,\ldots,d\}$ is a set of distances in $G$. $G$ is said to be \emph{$D$-magic} if there exists a bijection $f:V\rightarrow \{1,2, \ldots , n\}$ and a \emph{magic constant }$k$ such that for any vertex $x$, $\sum_{y\in N_D(x)} f(y) = k$, where $N_D(x)=\{y|d(x,y)=d, d\in D\}$ is the \emph{$D$-neighbourhood set of $x$}.
\end{definition}

It was proven in \cite{Vi} that every graph is a subgraph of a distance magic graph, which showed that there is no forbidden subgraph characterization for distance magic graph. One of the most important results for $D$-magic labeling is in \cite{OS13,AKV14} where it is shown that for a particular graph, the magic constant is unique and is determined by its fractional domination number. For more results, refer to survey articles in \cite{AFK11} and \cite{Ru}.

A \emph{distance-regular graph} is a regular connected graph with degree $r$ and diameter $d$, for which there are positive integers \[b_0=r, b_1, \ldots, b_{d-1}, c_1=1, c_2, \ldots, c_d,\] such that for any two vertices $u,v$ at distance $i$, there are precisely $c_i$ neighbours of $u$ in $G_{i-1}(v)$ ($1\leq i\leq d$) and $b_i$ neighbours of $u$ in $G_{i+1}(v)$ ($0\leq i\leq d-1$). The array \[\iota(G):=\{r,b_1,\ldots,b_{d-1};1,c_2,\ldots,c_d\}\] is called the \emph{intersection array} of $G$. A distance-regular graph of diameter at most 2 is called a \emph{strongly regular graph}. We say that an $r$-regular graph is \emph{strongly regular with parameters $(r,a,c)$} if each pair of adjacent vertices has $0\leq a \leq r$ common neighbours and each pair of non-adjacent vertices has $c\geq 1$ common neighbours.

It is easy to see that if a graph is $D$-magic then it is also $D^*$-magic, where $D^*=\{0,1,\ldots,d\}-D$. Since the diameter of strongly regular graphs is at most two, then by considering $D\in\{\{1\},\{0,1\}\}$ we already have all possible $D$-magic labelings for strongly regular graphs.

In this paper, we shall characterize $D$-magic strongly regular graphs and strongly regular graphs whose line graphs are $D$-magic (Section \ref{SRG}). We shall also provide necessary conditions such that a distance regular graph of diameter 3 is $D$-magic (Section \ref{diam3}). In order to do so, first we shall consider the spectra of distance magic regular graphs in Section \ref{spectra}. In some of the proofs, we shall use computational tools to show inexistence of distance magic labelings for a particular graph. The algorithm of the labeling search is presented in Section \ref{algo}.

\section{Spectra of Graphs and Distance Magic Labelings} \label{spectra}

In \cite{ACP16}, a necessary condition for a regular graph to be closed distance magic was presented; from which a characterization of closed distance magic strongly regular graphs was obtained. An analog result for distance magicness will be proved here.

\begin{proposition} \cite{ACP16}
If $G$ is a regular graph and has closed distance magic labeling, then $-1$ is an eigen value of $G$.
\label{-1}
\end{proposition}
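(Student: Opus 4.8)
The plan is to translate the combinatorial condition into linear algebra. Let $A$ be the adjacency matrix of $G$, let $\mathbf{1}$ denote the all-ones vector, and regard the labeling $f$ as the column vector $(f(v_1),\ldots,f(v_n))^T$ whose entries are a permutation of $\{1,\ldots,n\}$. Since the closed neighborhood of $x$ is $\{x\}\cup N(x)$, the weight $w(x)$ is exactly the $x$-th entry of $(A+I)f$, so the closed distance magic condition becomes the single matrix equation $(A+I)f = k\mathbf{1}$. Showing that $-1$ is an eigenvalue of $G$ is equivalent to showing that $A+I$ is singular, i.e., that it has a nontrivial kernel.

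First I would pin down the magic constant $k$ using regularity. Because $G$ is $r$-regular, $A\mathbf{1}=r\mathbf{1}$ and hence $(A+I)\mathbf{1}=(r+1)\mathbf{1}$; since $A$ is symmetric this also gives $\mathbf{1}^T(A+I)=(r+1)\mathbf{1}^T$. Left-multiplying the magic equation by $\mathbf{1}^T$ yields $(r+1)\mathbf{1}^Tf = kn$. As the entries of $f$ are a permutation of $\{1,\ldots,n\}$, we have $\mathbf{1}^Tf = n(n+1)/2$, which forces $k = (r+1)(n+1)/2$.

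The key idea is then to recenter $f$ about the mean label value. Set $g = f - \tfrac{n+1}{2}\mathbf{1}$. Using $(A+I)\mathbf{1}=(r+1)\mathbf{1}$ together with the value of $k$ just computed, one finds
\[
(A+I)g = (A+I)f - \tfrac{n+1}{2}(A+I)\mathbf{1} = k\mathbf{1} - \tfrac{(n+1)(r+1)}{2}\mathbf{1} = \mathbf{0},
\]
so $g$ lies in the kernel of $A+I$. To finish, I would argue that $g\neq\mathbf{0}$: if $g$ were zero then every entry of $f$ would equal $(n+1)/2$, contradicting that $f$ is a bijection onto $\{1,\ldots,n\}$ (for any $n\geq 2$). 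Hence $A+I$ has a nontrivial kernel, so $0$ is an eigenvalue of $A+I$ and $-1$ is an eigenvalue of $A$.

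I expect the computation of $k$ and the verification $(A+I)g=\mathbf{0}$ to present no real difficulty; the only genuine idea is recognizing that the correct kernel vector is the labeling shifted by its mean, and the single point needing care is confirming $g\neq\mathbf{0}$, which rests entirely on the distinctness of the labels.
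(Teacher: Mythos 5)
Your proof is correct and is essentially the same argument the paper uses: the paper itself states Proposition~\ref{-1} as a cited result from \cite{ACP16} without proof, but its proof of the analogous open-neighborhood result (Theorem~\ref{0}) proceeds identically, subtracting a constant multiple of $\mathbf{1}$ from the label vector to exhibit a kernel vector --- and your $g = f - \tfrac{n+1}{2}\mathbf{1}$ is exactly that vector for $A+I$, since $\tfrac{k}{r+1} = \tfrac{n+1}{2}$. If anything you are slightly more careful than the paper: you explicitly verify that the kernel vector is nonzero (which needs $n \geq 2$), a nondegeneracy check the paper's proof of Theorem~\ref{0} leaves implicit.
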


\begin{theorem} \cite{ACP16} \label{CDMSRG}
A strongly regular graph $G$ on $n$ vertices is closed distance magic if and only if $G \approx K_n$.
\end{theorem}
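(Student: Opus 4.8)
The plan is to prove both directions, with the reverse implication being immediate and the forward implication resting entirely on Proposition \ref{-1}. For the reverse direction, if $G \approx K_n$ then every closed neighbourhood $N[x]$ equals the whole vertex set $V$, so for any bijection $f$ the weight $w(x) = \sum_{y \in V} f(y) = \binom{n+1}{2}$ is the same constant at every vertex; hence $K_n$ is trivially closed distance magic.

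For the forward direction, I would begin from Proposition \ref{-1}: since a strongly regular graph is regular, its being closed distance magic forces $-1$ to be an eigenvalue of its adjacency matrix. I would then recall the spectrum of a strongly regular graph with parameters $(r,a,c)$, namely the eigenvalue $r$ with multiplicity one together with the two restricted eigenvalues $\theta, \tau$, which are the roots of $x^2 - (a-c)x - (r-c) = 0$. Because the degree $r$ is nonnegative and $-1 < 0$, the value $-1$ cannot equal $r$, so it must be one of $\theta, \tau$, i.e. a root of this quadratic. Substituting $x = -1$ yields $1 + (a-c) - (r-c) = 0$, which simplifies to $r = a+1$.

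The heart of the argument is then to show that $r = a+1$ forces $G \approx K_n$. I would argue combinatorially: fix a vertex $x$ and a neighbour $y$. Of the $r-1$ neighbours of $x$ other than $y$, exactly $a$ are also neighbours of $y$, leaving $r-1-a = 0$ vertices; thus every neighbour of $x$ distinct from $y$ is adjacent to $y$, and by symmetry the same holds with the roles of $x$ and $y$ exchanged, so $N[x] = N[y]$ whenever $x \sim y$. Consequently each connected component of $G$ is a clique. Since the defining condition $c \geq 1$ guarantees that any two non-adjacent vertices share a common neighbour, $G$ has diameter at most two and is therefore connected, so $G$ is a single clique $K_n$. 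Equivalently, one may invoke the classical fact that a connected graph has least eigenvalue exactly $-1$ precisely when it is complete.

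The main obstacle I anticipate is not the eigenvalue substitution, which is routine, but rather cleanly establishing the combinatorial implication $r = a+1 \Rightarrow G \approx K_n$ and handling the degenerate cases: in particular, confirming that connectedness is genuinely forced by $c \geq 1$, and verifying that the restricted eigenvalue identified with $-1$ is indeed the least eigenvalue, so that the complete graph is the only surviving possibility.
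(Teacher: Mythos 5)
Your proposal is correct. Note that the paper itself gives no proof of this statement --- it is quoted from \cite{ACP16} --- so there is no internal argument to compare against; but your proof follows exactly the spectral template the paper uses for its own analogous result, Theorem \ref{DMSRG}: invoke the necessary eigenvalue condition (here Proposition \ref{-1}, there Theorem \ref{0}), rule out the eigenvalue being the degree $r$, substitute into the restricted-eigenvalue quadratic $x^2+(c-a)x+(c-r)=0$ to get a parameter identity (here $r=a+1$, there complete multipartiteness), and then identify the graphs realizing that identity. Where the paper (in the distance magic case) simply cites the spectral characterization as ``well known,'' you prove the combinatorial step from scratch: $a=r-1$ forces $N[x]=N[y]$ for adjacent $x,y$, so components are cliques, and $c\geq 1$ forces connectedness, hence $G\approx K_n$; this is sound, including the implicit handling of the degenerate case where $G$ is already complete (and $c$ is vacuous). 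The only stylistic remark is that your worry about whether $-1$ is the \emph{least} eigenvalue is unnecessary --- your direct combinatorial argument never needs it, and the appeal to the classical ``least eigenvalue $-1$ iff complete'' fact is an optional alternative, not a required step.
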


Let us consider a graph $G$ admitting a distance magic labeling $f$ with magic constant $k$. Then the following matrix equation holds.
\begin{equation} \label{adjacencyeq}
A \mathbf{f} = k \mathbf{1},
\end{equation}
where $A$ is the adjacency matrix of $G$, $\mathbf{f}$ is a vector of all labels under the labeling $f$, and $\mathbf{1}$ is a vector of all ones.

Equation (\ref{adjacencyeq}) is essential in finding the following necessary condition for the existence of distance magic regular graphs.
\begin{theorem}
If $G$ is a regular graph admitting a distance magic labeling then 0 is an eigenvalue of $G$.
\label{0}
\end{theorem}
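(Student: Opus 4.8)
The plan is to exploit the matrix equation (\ref{adjacencyeq}) together with the fact that the all-ones vector plays a distinguished role for regular graphs. First I would recall that since $G$ is $r$-regular, the all-ones vector is an eigenvector of the adjacency matrix: $A\mathbf{1} = r\mathbf{1}$. Excluding the degenerate case $r = 0$ (in which $A$ is the zero matrix and trivially has $0$ as an eigenvalue), I may assume $r \geq 1$ and rewrite the right-hand side of (\ref{adjacencyeq}) as $k\mathbf{1} = \tfrac{k}{r}\, A\mathbf{1}$.

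Substituting this into (\ref{adjacencyeq}) gives $A\mathbf{f} = \tfrac{k}{r}\, A\mathbf{1}$, which rearranges to
\begin{equation*}
A\left(\mathbf{f} - \tfrac{k}{r}\,\mathbf{1}\right) = \mathbf{0}.
\end{equation*}
Hence the vector $\mathbf{v} := \mathbf{f} - \tfrac{k}{r}\,\mathbf{1}$ lies in the kernel of $A$. To conclude that $0$ is an eigenvalue of $G$, it then suffices to check that $\mathbf{v}$ is a \emph{nonzero} kernel vector.

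This last point is the only (and very mild) subtlety I expect: $\mathbf{v} = \mathbf{0}$ would force every coordinate of $\mathbf{f}$ to equal the single constant $\tfrac{k}{r}$, making $\mathbf{f}$ a constant vector. But $f$ is a bijection onto $\{1,2,\ldots,n\}$, so for $n \geq 2$ its entries are pairwise distinct and $\mathbf{f}$ cannot be constant. Therefore $\mathbf{v} \neq \mathbf{0}$, so $\mathbf{v}$ is a nontrivial element of the kernel of $A$, and $0$ is an eigenvalue of $G$. In short, the heart of the argument is that the labeling vector $\mathbf{f}$ and the scaled all-ones vector $\tfrac{k}{r}\mathbf{1}$ have the same image under $A$ while being genuinely different vectors — forcing $A$ to be singular — and the distinctness is guaranteed for free by the bijectivity of the labeling.
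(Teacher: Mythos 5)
Your proof is correct and follows essentially the same route as the paper: the paper's auxiliary vector $\mathbf{h} = \left(\tfrac{k}{r}\ \tfrac{k}{r}\ \ldots\ \tfrac{k}{r}\right)^t$ is exactly your $\tfrac{k}{r}\mathbf{1}$, and both arguments conclude from $A(\mathbf{f}-\mathbf{h}) = \mathbf{0}$ that $0$ is an eigenvalue. Your additional checks (the degenerate case $r=0$ and the verification that $\mathbf{f} - \tfrac{k}{r}\mathbf{1} \neq \mathbf{0}$ via bijectivity of $f$) are points the paper leaves implicit, so your write-up is in fact slightly more complete.
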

\begin{proof} Let $G$ be an $r$-regular graph. Thus in addition to the matrix equation (\ref{adjacencyeq}), another matrix equation holds, i.e., $A \mathbf{h} = k \mathbf{1}$, where $\mathbf{h} = (\frac{k}{r} \ \frac{k}{r} \ \ldots \ \frac{k}{r})^t$. Hence, $A(\mathbf{f}-\mathbf{h}) = 0$, which means $0$ is an eigen value of $G$.
\end{proof}

If a graph contains two vertices $x$ and $y$ with the same open neighborhood then the vector $\mathbf{u}$ whose only non-zero elements are $1$ and $-1$ corresponding to the vertices $x$ and $y$ is an eigenvector of $G$ with eigenvalue 0. An obvious example of such a graph is the complete multipartite graphs. However, we can see in the next theorem that there are infinitely many complete multipartite graphs which are not distance magic and this shows that the necessary condition in Theorem \ref{0} is not sufficient.

\begin{theorem} \cite{MRS03}
Let $H_{n,p}$ be the complete multipartite graph with $p$ partite sets, each partite set consists of $n$ vertices.
For $n>1$ and $p>1$, $H_{n,p}$ has distance magic labeling if only if $n$ is even or both $n$ and $p$ are odd.
\label{Hnp}
\end{theorem}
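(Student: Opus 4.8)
The plan is to prove Theorem~\ref{Hnp} characterizing when the balanced complete multipartite graph $H_{n,p}$ admits a distance magic labeling, so I need to analyze sums over open neighborhoods. First I would set up notation: label the $p$ partite sets $V_1,\ldots,V_p$, each of size $n$, with total $N=np$ vertices carrying the labels $\{1,2,\ldots,N\}$. In $H_{n,p}$ two vertices are adjacent precisely when they lie in different partite sets, so for a vertex $x\in V_i$ its open neighborhood is $V(G)\setminus V_i$. Hence the weight is $w(x)=S-\sigma_i$, where $S=\sum_{v}f(v)=\binom{N+1}{2}=\frac{N(N+1)}{2}$ is the total of all labels and $\sigma_i=\sum_{v\in V_i}f(v)$ is the part-sum of the block containing $x$. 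The magic condition $w(x)=k$ for all $x$ therefore reduces to the clean combinatorial requirement that every partite set has the \emph{same} block-sum, namely $\sigma_i=S-k$ for all $i$. So $H_{n,p}$ is distance magic if and only if the set $\{1,\ldots,N\}$ can be partitioned into $p$ blocks of equal size $n$, each with identical sum $\sigma=S/p=\frac{n(np+1)}{2}$.

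\smallskip

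The problem is thus transformed into a pure partition-into-equal-sums question, and the two conditions in the statement are exactly the arithmetic feasibility constraints. I would first handle necessity: the common block-sum $\sigma=\frac{n(np+1)}{2}$ must be an integer, which forces $n(np+1)$ to be even. When $n$ is even this holds automatically; when $n$ is odd, $np+1$ must be even, i.e.\ $np$ odd, which (since $n$ is odd) is equivalent to $p$ odd. This recovers precisely the stated dichotomy ``$n$ even, or both $n$ and $p$ odd,'' and shows that in the excluded case ($n$ odd and $p$ even) no equal partition can exist because the required block-sum is not an integer. That settles the ``only if'' direction.

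\smallskip

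For sufficiency I must exhibit an explicit equal-sum partition in each of the two allowed regimes. The natural tool is to group the labels $\{1,\ldots,N\}$ into arithmetic/antisymmetric pairs summing to the constant $N+1$; pairing $j$ with $N+1-j$ yields $\lfloor N/2\rfloor$ such pairs, and distributing pairs evenly across the $p$ blocks balances the sums. When $n$ is even, each block should receive $n/2$ pairs, and since the pairs are interchangeable one can always allot them so every block gets the same number, giving each block-sum $\frac{n}{2}(N+1)=\frac{n(np+1)}{2}=\sigma$ as required. When $n$ and $p$ are both odd, $N=np$ is odd so the pairing leaves the middle label $\frac{N+1}{2}$ unpaired; here I would place that central label together with an appropriate small correction and then balance the remaining pairs, or equivalently invoke the standard construction that partitions $\{1,\ldots,N\}$ into $p$ equal-sum blocks whenever $p\mid S$ and each block can be built as a magic-type arithmetic configuration. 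The main obstacle, and the only place real care is needed, is producing a clean uniform construction in the both-odd case where the leftover middle element breaks the simple pairing symmetry; I expect to resolve it by treating the $p$ central slots (one per block) separately and showing the residual labels split evenly, rather than by any deep machinery.
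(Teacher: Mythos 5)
Note first that the paper itself does not prove Theorem~\ref{Hnp}: it is quoted from \cite{MRS03}, so your proposal has to stand on its own. Its skeleton is sound, and it is the natural (and, as far as one can tell, the standard) route. The reduction is exactly right: for $x\in V_i$ the open neighborhood is $V(G)\setminus V_i$, so $w(x)=S-\sigma_i$ with $S=\frac{np(np+1)}{2}$, and a labeling is distance magic precisely when all block sums $\sigma_i$ coincide, which forces $\sigma_i=S/p=\frac{n(np+1)}{2}$. Your necessity argument (this quantity fails to be an integer exactly when $n$ is odd and $p$ is even) is complete, as is your construction for even $n$: each block receives $n/2$ of the pairs $\{j,\,np+1-j\}$, so every block sums to $\frac{n}{2}(np+1)$.

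The genuine gap is sufficiency when $n$ and $p$ are both odd, which you yourself flag as ``the only place real care is needed'' but then do not carry out. Neither of your proposed escapes works as stated. The plan of placing the central label $\frac{N+1}{2}$ somewhere ``with an appropriate small correction'' is not a construction: removing the middle label destroys the pairing symmetry, and you never specify the correction or prove the residual labels ``split evenly.'' Worse, the fallback you want to cite --- that $\{1,\ldots,N\}$ splits into $p$ equal-sum blocks whenever $p\mid S$ --- is false once the blocks are also required to have equal size $n$ (take $n=1$ and $p$ odd: then $p\mid S$, yet distinct singletons cannot have equal sums); in the form you actually need, with prescribed block size $n$, it is precisely the statement being proved, so invoking it is circular. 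What is required is an explicit construction for odd $n\geq 3$ and odd $p$, for instance: first partition $\{1,\ldots,3p\}$ into $p$ triples of common sum $\frac{3(3p+1)}{2}$ (a classical equal-sum triple system existing exactly for odd $p$; e.g.\ for $p=3$ the triples $\{1,5,9\},\{2,6,7\},\{3,4,8\}$), then pair the remaining labels as $\{3p+i,\;np+1-i\}$, each pair summing to $np+3p+1$, and give each block $\frac{n-3}{2}$ of these pairs; all block sums are then equal. Until such a construction (or an equivalent device, such as magic-rectangle existence) is supplied, the both-odd half of the ``if'' direction remains unproven.
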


In \cite{Ji99}, Jinnah proved that $G$ has distance magic labeling if and only if $\overline{G}$ has closed distance magic labeling;
and so, combining with Theorem \ref{-1}, we obtain the following.
\begin{corollary}
If $G$ is a regular graph and $\overline{G}$ is distance magic then $-1$ is an eigen value of $G$.
\end{corollary}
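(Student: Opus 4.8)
The plan is to chain together Jinnah's equivalence with Proposition \ref{-1}; no new ideas are needed, so this reduces to applying the two cited results in the right order with the hypotheses correctly aligned. First I would invoke Jinnah's result, stated in the text as ``$G$ has distance magic labeling if and only if $\overline{G}$ has closed distance magic labeling,'' but applied with $\overline{G}$ playing the role of the generic graph. Since complementation is an involution, $\overline{\overline{G}} = G$, and the statement becomes: $\overline{G}$ is distance magic if and only if $G$ is closed distance magic. Thus the hypothesis that $\overline{G}$ is distance magic delivers, at once, that $G$ itself is closed distance magic.

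Next I would apply Proposition \ref{-1}, which asserts that a regular graph admitting a closed distance magic labeling must have $-1$ as an eigenvalue. Since $G$ is assumed regular and we have just established that $G$ is closed distance magic, the two hypotheses of Proposition \ref{-1} are met, and its conclusion gives precisely that $-1$ is an eigenvalue of $G$. This completes the argument.

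I expect there to be essentially no obstacle: the corollary is a formal consequence of the preceding two results. The single point deserving a moment's care is applying Jinnah's biconditional in the correct direction — one must use the involution $\overline{\overline{G}} = G$ so that the closed distance magic property attaches to $G$, the graph assumed to be regular, rather than to $\overline{G}$. It is worth remarking that the argument uses regularity of $G$ only, and not of $\overline{G}$, so no auxiliary observation about the complement of a regular graph is required.
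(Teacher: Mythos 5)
Your proposal is correct and follows exactly the paper's intended argument: the paper derives the corollary by combining Jinnah's equivalence (applied to $\overline{G}$, using $\overline{\overline{G}}=G$) with Proposition \ref{-1}, precisely as you do. Your extra remark that only the regularity of $G$ is needed is accurate and consistent with the paper's reasoning.
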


A necessary condition for line graphs of regular graphs to admit closed distance magic labelings is presented in \cite{ACP16}. We shall provide an analog condition for distance magic labelings.

\begin{theorem} \cite{ACP16}
Let $G$ be an $r$-regular graph. If the line graph of $G$, $L(G)$, is closed distance magic, then $G$ has $1-r$ as an eigen value.
\end{theorem}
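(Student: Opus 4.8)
The plan is to connect the spectra of $G$ and $L(G)$ through the vertex--edge incidence matrix $B$ of $G$, and then to invoke Proposition \ref{-1}. First I would record the two standard identities $BB^t = A + rI$ and $B^tB = A(L(G)) + 2I$, where $A$ is the adjacency matrix of $G$. The diagonal entries of $BB^t$ count degrees (all equal to $r$, since $G$ is $r$-regular), while its off-diagonal entries count edges incident to both endpoints, recovering $A$; the diagonal entries of $B^tB$ equal $2$ (each edge has two endpoints), while its off-diagonal entries detect whether two edges share a vertex, i.e.\ adjacency in $L(G)$.

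Next, I would observe that $L(G)$ is $(2r-2)$-regular, so Proposition \ref{-1} applies directly: since $L(G)$ is closed distance magic, $-1$ is an eigenvalue of $A(L(G))$. Substituting this into the identity $B^tB = A(L(G)) + 2I$ shows that $1 = (-1)+2$ is an eigenvalue of $B^tB$.

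The crucial step is the transfer from $B^tB$ to $BB^t$: for any rectangular matrix $B$, the products $B^tB$ and $BB^t$ share exactly the same \emph{nonzero} eigenvalues. Since $1 \neq 0$, the eigenvalue $1$ of $B^tB$ is also an eigenvalue of $BB^t = A + rI$, and therefore $1 - r$ is an eigenvalue of $A$, hence of $G$. The argument presents no serious obstacle beyond correctly establishing the incidence-matrix identities; the one point that genuinely requires care is that the eigenvalue transfer between $B^tB$ and $BB^t$ is valid only for nonzero eigenvalues, which is precisely why producing the value $1$ (rather than $0$) out of Proposition \ref{-1} is what makes the argument close.
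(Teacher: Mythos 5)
Your proof is correct, but it takes a genuinely different route from the paper's. (Note that the paper states this theorem as a citation from \cite{ACP16} and does not reprove it; the closest in-paper comparison is the proof of the distance magic analogue, Theorem \ref{LG}.) The paper's method is to quote, as a known fact, the full spectrum of the line graph of a regular graph --- if $G$ is $r$-regular with eigenvalues $r,\lambda_1,\ldots,\lambda_{s-1}$, then $L(G)$ has eigenvalues $2r-2$, $r-2+\lambda_i$, and $-2$ --- and then test which of these can equal the eigenvalue forced by the magicness condition; here that value is $-1$ by Proposition \ref{-1}, and since $2r-2=-1$ and $-2=-1$ are impossible, one concludes $\lambda_i=1-r$ for some $i$. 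You instead rederive exactly the portion of that spectrum theorem that is needed, via the incidence-matrix identities $BB^t=A+rI$ and $B^tB=A(L(G))+2I$ together with the fact that $B^tB$ and $BB^t$ share the same nonzero eigenvalues; your explicit check that the transferred value $1=(-1)+2$ is nonzero is precisely the point where the argument closes, and it is handled correctly. As for what each approach buys: the paper's is shorter once the standard line-graph spectrum theorem is granted, and it makes the degenerate branches visible (in the distance magic analogue, Theorem \ref{LG}, the branch $2r-2=0$ is what produces the $K_2$ case); yours is self-contained and requires no case analysis at all, since the eigenvalue transfer treats all nonzero eigenvalues uniformly --- which also makes transparent why no exceptional graph appears in the closed distance magic statement, a fact that in the paper's style of argument must be checked by ruling out $2r-2=-1$ and $-2=-1$ separately.
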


\begin{theorem} \label{LG}
Let $G$ be an $r$-regular graph. If the line graph of $G$, $L(G)$, is distance magic, then $G$ is either a $K_2$ or has $2-r$ as an eigen value.
\end{theorem}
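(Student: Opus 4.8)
The plan is to convert the distance-magic hypothesis on $L(G)$ into the spectral condition supplied by Theorem \ref{0}, and then push the resulting eigenvalue back to $G$ through the classical relationship between the adjacency spectra of a regular graph and its line graph. The device for the latter is the vertex-edge incidence matrix $B$ of the $r$-regular graph $G$, of size $n \times m$ where $n=|V(G)|$ and $m=|E(G)|$. A direct computation gives $BB^{t}=A+rI_n$ and $B^{t}B=A(L(G))+2I_m$, where $A$ and $A(L(G))$ denote the adjacency matrices of $G$ and $L(G)$. Because $BB^{t}$ and $B^{t}B$ share the same nonzero eigenvalues, the spectrum of $L(G)$ can be read off at once: each eigenvalue $\mu\neq -r$ of $G$ contributes the value $\mu+r-2$, and every remaining eigenvalue of $L(G)$ equals $-2$ (these arise from the kernel of $B^{t}$, to which the eigenvalue $-r$ of $G$ also contributes when $G$ is bipartite). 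I would also note that $L(G)$ is $(2r-2)$-regular.

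I would isolate the boundary case $r=1$ first. A connected $1$-regular graph is exactly $K_2$, whose line graph is $K_1$; this is the source of the stated exception, since $K_1$ has degree $0$ and so Theorem \ref{0}, which needs positive degree, cannot be applied. For every $r\geq 2$, by contrast, $L(G)$ is $(2r-2)$-regular with positive degree, and the machinery is available.

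Now suppose $L(G)$ is distance magic with $r\geq 2$. Theorem \ref{0}, applied to the regular graph $L(G)$, yields that $0$ is an eigenvalue of $L(G)$. Matching this against the spectrum above, either $0=\mu+r-2$ for some eigenvalue $\mu$ of $G$, or $0=-2$; the latter is impossible, so $\mu=2-r$. As $2-r\neq -r$, this $\mu$ genuinely belongs to the shifted family rather than to the kernel block, and hence $2-r$ is an eigenvalue of $G$, which is the claim.

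The step I expect to demand the most care is the spectral bookkeeping in the first paragraph---specifically, confirming that the only eigenvalue of $L(G)$ outside the shifted family $\{\mu+r-2\}$ is $-2$. This is precisely what prevents the eigenvalue $0$ forced by Theorem \ref{0} from being absorbed into the kernel block, thereby forcing $\mu=2-r$. Handling the multiplicities cleanly (the eigenvalue $-r$ of $G$ in the bipartite case, and the $m-n$ surplus copies of $-2$ when $m>n$) is where one must be attentive, although these refinements do not affect the conclusion.
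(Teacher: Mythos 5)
Your proposal is correct and follows essentially the same route as the paper: apply Theorem \ref{0} to the regular graph $L(G)$ and translate the resulting eigenvalue $0$ back to $G$ via the spectrum of the line graph of a regular graph. The only differences are presentational --- you derive that spectrum from the incidence-matrix identities $BB^{t}=A+rI$ and $B^{t}B=A(L(G))+2I$ and isolate $r=1$ up front, whereas the paper simply cites the eigenvalue list $2r-2,\ r-2+\lambda_i,\ -2$ and recovers the $K_2$ exception from the case $2r-2=0$.
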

\begin{proof}
Let the eigen values of $G$ be $r, \lambda_1, \ldots, \lambda_{s-1}$, then the eigen values of $L(G)$ are $2r-2, r-2+\lambda_1, \ldots, r-2+\lambda_{s-1}, -2$. Since the line graph of regular graph is also regular, then, if $L(G)$ has distance magic labeling, by Theorem \ref{0}, $0$ is an eigen value of $L(G)$. Therefore, either $2r-2=0$ or there exists an $i$ such that $r-2+\lambda_i=0$. If $2r-2=0$ then $r=1$ and $G \approx K_2$. In the second case, $2-r$ is an eigen value of $G$.
\end{proof}

It is known that the complete graph of order $n$, $K_n$, has $-1$ as an eigenvalue (of multiplicity $n-1$). In particular, for $K_4$ which is a $3$-regular graph, one of its eigenvalues is $-1=2-3$. In this case, $L(K_4)\approx H_{2,3}$ is distance magic by Theorem \ref{Hnp}. On the other hand, for $n\equiv 0 \bmod 4$, the cycle $C_n$, which is $2$-regular, has $0=2-2$ as an eigenvalue. However, $L(C_n)\approx C_n$ is not distance magic for $n\neq 4$ (see \cite{MRS03}), which shows that the necessary conditions in Theorem \ref{LG} are not sufficient.

\section{A Naive Exhaustive Search} \label{algo}

In the next section, there is a need to prove that a regular graph is not distance magic. To do so, we utilise a naive exhaustive search of distance magic labelings of the regular graph. To actually search for a distance magic labeling for a particular graph, a heuristic approach might be more efficient; one such approach can be found in \cite{YS15}.

Two properties of distance magic regular graphs will be used in the algorithm, one connected to its magic constant and the other to its degree. Both properties were proved in \cite{MRS03}. As mentioned in the Introduction, the magic constant of a graph is unique and is determined by its fractional domination number. For a regular graph, the magic constant is easier to count.
\begin{proposition} \cite{MRS03} \label{constant}
If an $r$-regular graph of order $n$ admits a distance magic labeling then the magic constant is $k=\frac{r(n+1)}{2}$.
\end{proposition}

\begin{proposition} \cite{MRS03} \label{odddeg}
If an $r$-regular graph is distance magic then $r$ is even.
\end{proposition}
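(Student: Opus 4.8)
The plan is to argue by contradiction: assume $G$ is an $r$-regular distance magic graph on $n$ vertices with $r$ odd, and derive a contradiction by combining the magic-constant formula of Proposition \ref{constant} with a parity count. First I would invoke Proposition \ref{constant}, which forces the magic constant to equal $k=\frac{r(n+1)}{2}$. Since $k$ is a sum of integer labels it must be a positive integer, so $r(n+1)$ must be even; with $r$ odd this requires $n$ to be odd. This disposes of the case $n$ even immediately, and leaves the genuinely delicate case in which $n$ is odd and $k$ is a bona fide integer.

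For the case $n$ odd, the idea is to track labels by parity. I would partition $V(G)$ into $V_{\mathrm{odd}}$ and $V_{\mathrm{even}}$ according to whether $f(x)$ is odd or even, so that $|V_{\mathrm{odd}}|=\frac{n+1}{2}$ and $|V_{\mathrm{even}}|=\frac{n-1}{2}$; since $n$ is odd these cardinalities have opposite parity, so exactly one of them is odd. The crucial observation is that, because $w(x)=\sum_{y\in N(x)}f(y)=k$ is a sum of exactly $r$ labels, the parity of $k$ equals the parity of the number $o(x):=|N(x)\cap V_{\mathrm{odd}}|$ of odd-labeled neighbours of $x$; hence $o(x)\equiv k \pmod 2$ for every vertex $x$.

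Next I would select the colour class $C$ on which to apply the handshake lemma: take $C=V_{\mathrm{odd}}$ when $k$ is odd and $C=V_{\mathrm{even}}$ when $k$ is even. Within the induced subgraph $G[C]$ the degree of a vertex $x$ equals $o(x)$ if $x\in V_{\mathrm{odd}}$ and equals $r-o(x)$ if $x\in V_{\mathrm{even}}$; using $o(x)\equiv k$ together with $r$ odd, in either choice every vertex of $C$ turns out to have odd degree in $G[C]$. The linchpin is the bookkeeping identity $k\equiv\frac{n+1}{2}\pmod 2$, valid because $r$ is odd: it guarantees that $k$ odd is equivalent to $|V_{\mathrm{odd}}|$ odd while $k$ even is equivalent to $|V_{\mathrm{even}}|$ odd, so the class $C$ chosen above always has odd cardinality. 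A graph in which every vertex has odd degree must have an even number of vertices, so $G[C]$ cannot exist, yielding the contradiction.

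The main obstacle I anticipate is not the integrality step, which is routine, but locating the right invariant in the odd case: the naive global sum $\sum_x w(x)$ and the $\mathbb{F}_2$-reduction of the relation (\ref{adjacencyeq}) both turn out to be self-consistent and produce no contradiction, so one must descend to the level of a single colour class and invoke the handshake lemma there. Verifying that the parity of $k$ always steers the construction onto the odd-sized class, rather than onto the harmless even-sized one, is the delicate point, and it relies essentially on $r$ being odd.
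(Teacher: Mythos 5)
Your proof is correct. Note that the paper does not actually prove this proposition --- it is quoted from \cite{MRS03} --- so there is no internal proof to compare against; judged on its own, your argument is complete and sound. The integrality of $k=\frac{r(n+1)}{2}$ (Proposition \ref{constant}) correctly forces $n$ to be odd when $r$ is odd; the reduction of the magic condition modulo $2$ gives $o(x)\equiv k \pmod 2$ for every vertex $x$; and the handshake lemma applied to the induced subgraph $G[V_{\mathrm{odd}}]$ (when $k$ is odd) or $G[V_{\mathrm{even}}]$ (when $k$ is even) forces the chosen class to have even cardinality, while the congruence $k\equiv\frac{n+1}{2}\pmod 2$ (which uses $r$ odd) forces that same class to have odd cardinality --- a contradiction in both cases. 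Two small remarks. First, your appeal to Proposition \ref{constant} is not circular: that proposition follows from the double count $nk=\sum_x w(x)=r\sum_y f(y)=\frac{rn(n+1)}{2}$, which nowhere assumes $r$ even. Second, your closing claim that the $\mathbb{F}_2$-reduction of equation (\ref{adjacencyeq}) ``produces no contradiction'' is slightly overstated as a dichotomy --- your colour-class argument \emph{is} an $\mathbb{F}_2$ argument, just restricted to the rows indexed by one class and combined with the symmetry of $A$ (equivalently, double-counting edges inside that class); what fails is only the naive summation over \emph{all} rows. This is essentially the standard proof of the result in the literature.
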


\begin{algorithm}[H]
\SetAlgoLined
\KwIn{$n$: order of a regular graph $G$; $r$: degree of $G$; $\{x_1,x_2,\ldots,x_n\}$: the set of vertices of $G$; $A$: the adjacency matrix of $G$}
\KwOut{a distance magic labeling $f$ of $G$ (if exists)}
 \eIf{$r$ is odd}{
  \Return{"$G$ is not distance magic"}\;
  }{
  $P \gets$ the set of all permutations of the set $\{1,2,\ldots,n\}$\;
  $k \gets \frac{r(n+1)}{2}$\;
  \While{$P\neq \emptyset$}{
   choose $f\in P$\;
   \For{$i \gets 1$ \textbf{to} $n$}{
   label $x_i$ with $f[i]$\;
   }
   $i \gets 1$\;
   $w \gets w(x_1)$ (the weight of $x_1$)\;
   \While{$w=k$ \textbf{and} $i<n+1$}{
    $i \gets i+1$\;
    $w \gets w(x_i)$ (the weight of $x_i$)\;
    }
   \eIf{$i>n$}{
    \Return{$f$ "is a distance magic labeling of $G$"}\;
    \textbf{stop}\;
    }{
    $P \gets P\setminus \{f\}$\;
    }
   }
  \Return{"$G$ is not distance magic"}\;
  }
 \caption{Distance Magic Labeling Search}
\end{algorithm}

\newpage
The algorithm could easily be altered so it could be used to search for a closed distance magic labeling of a particular graph,  by adjusting the vertex-weight formula and utilising the following properties instead of Propositions \ref{constant} and \ref{odddeg}.
\begin{proposition}
If an $r$-regular graph of order $n$ admits a closed distance magic labeling then the magic constant is $k=\frac{(r+1)(n+1)}{2}$.
\end{proposition}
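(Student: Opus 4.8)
The plan is to mirror the double-counting argument that yields the magic constant in Proposition \ref{constant}, adapting it from open to closed neighborhoods. Let $f$ be a closed distance magic labeling of the $r$-regular graph $G$ of order $n$, with magic constant $k$, so that for every vertex $x$ one has $w(x)=\sum_{y\in N[x]}f(y)=k$, where $N[x]=\{x\}\cup N(x)$ is the closed neighborhood of $x$. The idea is to compute the grand total $\sum_{x\in V(G)}w(x)$ in two different ways and to compare the results.

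First I would count the total directly: since every vertex has weight exactly $k$ and there are $n$ vertices, $\sum_{x\in V(G)}w(x)=nk$. Second, I would recompute the same total by interchanging the order of summation, counting for each fixed label $f(y)$ how many vertex-weights it contributes to. The label $f(y)$ appears in $w(x)$ precisely when $y\in N[x]$, and since $y\in N[x]$ if and only if $x\in N[y]$, the number of such $x$ is $|N[y]|=r+1$, using that $G$ is $r$-regular. Hence $\sum_{x\in V(G)}w(x)=(r+1)\sum_{y\in V(G)}f(y)$. Because $f$ is a bijection onto $\{1,2,\ldots,n\}$, the inner sum is the triangular number $\tfrac{n(n+1)}{2}$. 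Equating the two counts gives $nk=(r+1)\tfrac{n(n+1)}{2}$, and dividing by $n$ yields $k=\tfrac{(r+1)(n+1)}{2}$, as claimed.

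The argument is almost entirely routine arithmetic, so the only point deserving care — and the sole place it could go wrong — is the claim that each label is counted exactly $r+1$ rather than $r$ times. This is precisely where the closed case departs from Proposition \ref{constant}: the self-membership $y\in N[y]$ supplies the extra unit, and $r$-regularity supplies the remaining $r$. Making that bookkeeping explicit, rather than any deep computation, is the main (and minor) obstacle.
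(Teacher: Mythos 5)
Your proof is correct. The paper states this proposition without proof (it is simply the closed-neighborhood analog of Proposition \ref{constant}, which is cited from the literature), and your double-counting argument --- equating $\sum_x w(x) = nk$ with $(r+1)\sum_y f(y) = (r+1)\tfrac{n(n+1)}{2}$, where the extra unit beyond $r$ comes from the self-membership $y \in N[y]$ --- is exactly the standard argument that establishes it.
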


\begin{proposition}
If an $r$-regular graph of order $n$ is closed distance magic then $n+r$ is odd.
\end{proposition}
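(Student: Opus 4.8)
The plan is to mirror the proof of Proposition \ref{odddeg}, its distance magic counterpart, by combining an integrality constraint on the magic constant with the handshake lemma. First I would invoke the immediately preceding proposition, which asserts that a closed distance magic $r$-regular graph of order $n$ has magic constant $k=\frac{(r+1)(n+1)}{2}$ (this can also be re-derived on the spot by double counting: each label $f(y)$ contributes to the weight of every vertex of $N[y]$, hence to exactly $r+1$ weights, so $nk=(r+1)\sum_y f(y)=(r+1)\tfrac{n(n+1)}{2}$). Since $k$ is the weight of a vertex, it is a sum of labels from $\{1,\ldots,n\}$ and is therefore a positive integer. Consequently $(r+1)(n+1)$ must be even.

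Next I would record the elementary fact that in any $r$-regular graph on $n$ vertices the degree sum satisfies $nr=2|E|$ and is hence even. These two parity statements, namely that $(r+1)(n+1)$ is even and that $nr$ is even, are the only ingredients the argument needs.

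The proof then splits on the parity of $n$. If $n$ is even, then $n+1$ is odd, so the integrality condition $(r+1)(n+1)\equiv 0 \pmod 2$ forces $r+1$ to be even, i.e.\ $r$ to be odd; thus $n+r$ is odd. If instead $n$ is odd, then the handshake identity forces $r$ to be even (an $r$-regular graph on an odd number of vertices cannot have odd degree); again $n+r$ is odd. In either case $n+r$ is odd, which is the claim.

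I do not expect a genuine obstacle here, as the result is a short parity argument; the only real decision is which of the two facts to deploy in each case. The subtle point worth flagging is that the integrality of $k$ by itself does not resolve the odd-$n$ case: it rules out $n$ and $r$ both even but permits $n$ and $r$ both odd. This is exactly why the handshake lemma is needed, to exclude the both-odd configuration (indeed such a graph does not exist at all, making that sub-case vacuous). Keeping the two sources of parity information separate is therefore the one place where care is required.
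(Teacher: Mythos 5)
Your proof is correct. One thing to be aware of: the paper itself states this proposition \emph{without proof} --- it appears in Section \ref{algo} only as a property to be plugged into the search algorithm, alongside the closed-neighborhood magic-constant formula --- so there is no in-paper argument to compare against. Your two ingredients (integrality of $k=\frac{(r+1)(n+1)}{2}$, obtained by double counting over closed neighborhoods, plus the handshake lemma) are exactly what is needed, and your remark that integrality alone cannot exclude the case where $n$ and $r$ are both odd correctly identifies why the handshake lemma is indispensable.

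It is worth noting that the paper's own machinery yields a one-line alternative: by Jinnah's result \cite{Ji99}, quoted in Section \ref{spectra}, $G$ is closed distance magic if and only if $\overline{G}$ is distance magic; since $\overline{G}$ is $(n-1-r)$-regular, Proposition \ref{odddeg} forces $n-1-r$ to be even, i.e.\ $n-r$ (and hence $n+r$) to be odd. The duality argument exhibits this proposition as literally the complement of Proposition \ref{odddeg}, which is presumably why the authors felt no proof was needed; your direct argument has the advantage of being self-contained, not relying on the unproved (in this paper) results imported from \cite{MRS03}.
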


\section{$D$-Magic Labelings for Strongly Regular Graphs} \label{SRG}


Applying Theorem \ref{0} to strongly regular graphs leads to the complete characterization of distance magic strongly regular graphs.
\begin{theorem} \label{DMSRG}
Let $G$ be a strongly regular graph of degree $r$, on $n$ vertices. $G$ admits a distance magic labeling if and only if $G$ is a regular complete multipartite graph $H_{n-r,\frac {n}{n-r}}$, where $n-r$ is even or both $n-r$ and $\frac {n}{n-r}$ are odd.
\end{theorem}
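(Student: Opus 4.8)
The plan is to combine the eigenvalue obstruction of Theorem \ref{0} with the known spectrum of strongly regular graphs, and then invoke Theorem \ref{Hnp} for the final parity bookkeeping. First I would recall that a strongly regular graph with parameters $(r,a,c)$ has exactly three distinct eigenvalues: the degree $r$ (with multiplicity one), together with the two roots $\theta,\tau$ of $x^{2}-(a-c)x-(r-c)=0$, obtained from $A^{2}=(r-c)I+(a-c)A+cJ$ applied to an eigenvector orthogonal to $\mathbf{1}$. By Vieta's formula their product is $\theta\tau=c-r$. Since $G$ has a nonadjacent pair with $c\geq 1$ common neighbours, it has edges and so $r\geq 1$; hence $0\neq r$, and $0$ is an eigenvalue of $G$ precisely when $\theta\tau=0$, that is, precisely when $c=r$.

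Next I would extract the structural consequence of $c=r$. For any two nonadjacent vertices $x,y$ the common-neighbour count is $|N(x)\cap N(y)|=c=r=|N(x)|$, which forces $N(x)=N(y)$; conversely, two vertices with equal neighbourhoods cannot be adjacent, since adjacency would put each in its own neighbourhood. Thus, for distinct vertices, nonadjacency is equivalent to having identical neighbourhoods. Grouping vertices by their neighbourhood therefore partitions $V(G)$ into independent sets, between any two of which all possible edges are present; that is, $G$ is complete multipartite. Because $G$ is $r$-regular, a vertex in a part of size $m_i$ has degree $n-m_i$, so regularity forces all parts to share a common size $m$, whence $m=n-r$ and the number of parts is $\frac{n}{n-r}$, giving $G\approx H_{n-r,\,n/(n-r)}$. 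The hypothesis $c\geq 1$ rules out the degenerate complete-graph case and guarantees part size $n-r\geq 2$ and at least two parts, so Theorem \ref{Hnp} is applicable.

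Finally I would close the equivalence using Theorem \ref{Hnp}. If $G$ admits a distance magic labeling, then, being isomorphic to $H_{n-r,\,n/(n-r)}$, its parameters must satisfy the stated parity condition, namely that $n-r$ is even or both $n-r$ and $\frac{n}{n-r}$ are odd. Conversely, any $H_{n-r,\,n/(n-r)}$ meeting that condition is distance magic by the same theorem, and every such graph is indeed strongly regular with $c=r$, closing the loop.

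I expect the main obstacle to be the middle step: verifying cleanly that $c=r$ forces the complete multipartite structure and, in particular, that $r$-regularity pins down equal part sizes together with the exact values $m=n-r$ and $p=\frac{n}{n-r}$. The spectral computation and the final parity argument are essentially bookkeeping once the eigenvalue polynomial and Theorem \ref{Hnp} are in hand.
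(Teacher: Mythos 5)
Your proposal is correct and follows essentially the same route as the paper: Theorem \ref{0} forces a zero eigenvalue, which for a strongly regular graph happens exactly when the graph is complete multipartite, regularity then identifies it as $H_{n-r,\frac{n}{n-r}}$, and Theorem \ref{Hnp} settles the parity condition. The only difference is that you prove the middle characterization explicitly (zero eigenvalue $\Leftrightarrow$ $c=r$, and $c=r$ forces equal neighbourhoods for nonadjacent vertices, hence complete multipartite structure), whereas the paper simply cites this fact as well known from \cite{BCN}.
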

\begin{proof}
By Theorem \ref{0}, if G admits a distance magic labeling then $G$ has $0$ as an eigenvalue. It is well known that a strongly regular graph has eigenvalue 0 if and only if it is a complete multipartite graph (see, for instance, \cite{BCN}). Since $G$ is $r$-regular, then the cardinality of each partite set is $n-r$, and so $G \approx H_{n-r,\frac {n}{n-r}}$. We complete the proof by applying Theorem \ref{Hnp}.
\end{proof}

Combining the results in Theorems \ref{DMSRG} and \ref{CDMSRG}, we obtain the following characterization of $D$-magic strongly regular graphs.
\begin{theorem}
A strongly regular graph $G$ is $D$-magic if and only if
\begin{enumerate}
  \item $D=\{0,1\}$ and $G \approx K_n$, or
  \item $D=\{0,2\}$ and $G \simeq H_{n-r,\frac {n}{n-r}}$, where $n-r$ is even or both $n-r$ and $\frac {n}{n-r}$ are odd, or
  \item $D=\{0,1,2\}$, or
  \item $D=\{1\}$ and $G \simeq H_{n-r,\frac {n}{n-r}}$, where $n-r$ is even or both $n-r$ and $\frac {n}{n-r}$ are odd, or
  \item $D=\{2\}$ and $G \approx K_n$.
\end{enumerate}
\end{theorem}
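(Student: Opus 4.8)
The plan is to turn the statement into a finite case analysis over all distance sets $D\subseteq\{0,1,2\}$, using the complementarity observation recorded in the Introduction. Since a strongly regular graph is connected of diameter at most $2$, every vertex $y$ lies at a well-defined distance $0$, $1$, or $2$ from any fixed vertex $x$, so $N_{\{0,1,2\}}(x)=V(G)$ and, for any $D$, the sets $N_D(x)$ and $N_{D^*}(x)$ with $D^*=\{0,1,2\}\setminus D$ partition $V(G)$. Consequently, if $f$ is a $D$-magic labeling with constant $k$, then the same $f$ is a $D^*$-magic labeling with constant $\frac{n(n+1)}{2}-k$; hence $G$ is $D$-magic if and only if it is $D^*$-magic. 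This pairs the $2^3=8$ candidate sets into four complementary classes, and it suffices to settle one representative from each: $\{0,1,2\}$ (paired with $\emptyset$), $\{1\}$ (paired with $\{0,2\}$), $\{0,1\}$ (paired with $\{2\}$), and $\{0\}$ (paired with $\{1,2\}$).

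First I would dispose of the two degenerate classes. For $D=\{0,1,2\}$ one has $N_D(x)=V(G)$, so every weight equals $\frac{n(n+1)}{2}$ under any bijection; thus every strongly regular graph is $\{0,1,2\}$-magic, which is item (3), and its complement $\emptyset$ gives the trivially always-magic empty-sum case. For $D=\{0\}$ one has $N_D(x)=\{x\}$, so the weight at $x$ equals $f(x)$; since $f$ is a bijection on $n>1$ vertices these values cannot all coincide, so no strongly regular graph is $\{0\}$-magic, and by complementarity none is $\{1,2\}$-magic either. This explains why these two classes contribute no entry to the list.

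It then remains to read off the two nontrivial classes from the previously established characterizations. A $\{1\}$-magic labeling is exactly a distance magic labeling, so Theorem \ref{DMSRG} identifies the $\{1\}$-magic strongly regular graphs as the regular complete multipartite graphs $H_{n-r,\frac{n}{n-r}}$ subject to the stated parity condition; by complementarity the same class is precisely the $\{0,2\}$-magic graphs, giving items (4) and (2). Likewise a $\{0,1\}$-magic labeling is exactly a closed distance magic labeling, so Theorem \ref{CDMSRG} forces $G\approx K_n$, and complementarity transfers this to the $\{2\}$-magic case, giving items (1) and (5). Collecting the four representatives together with their complements yields exactly the five alternatives claimed.

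The argument is essentially bookkeeping once the complementarity reduction is available, so I do not expect a serious obstacle; the one point deserving explicit care is the claim that $N_D(x)$ and $N_{D^*}(x)$ partition $V(G)$, which hinges on $G$ being connected of diameter at most $2$ (so that distances beyond $2$ never occur, and in the complete-graph case the empty distance-$2$ layer is handled correctly). I would therefore prove that identity once at the outset, since it is the single fact that lets all five cases collapse onto the two earlier theorems.
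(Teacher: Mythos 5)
Your proposal is correct and takes essentially the same route as the paper: the paper derives this theorem by combining Theorem \ref{DMSRG} (the $\{1\}$-magic case) and Theorem \ref{CDMSRG} (the $\{0,1\}$-magic case) with the complementarity observation from the Introduction that $D$-magic is equivalent to $D^*$-magic. Your write-up merely makes explicit what the paper leaves implicit — the pairing of the eight subsets and the disposal of the degenerate cases $\{0,1,2\}$, $\emptyset$, $\{0\}$, and $\{1,2\}$ — which is a faithful elaboration rather than a different argument.
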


We also obtain the following characterization of strongly regular graphs whose line graphs are distance magic.
\begin{theorem}
Let $G$ be a strongly regular graph. The line graph of $G$ is distance magic if and only if $G$ is a cycle of order 4.
\end{theorem}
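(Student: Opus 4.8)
The plan is to push the spectral obstruction of Theorem~\ref{LG} through the very restrictive eigenvalue structure of strongly regular graphs. A strongly regular graph with parameters $(r,a,c)$ has exactly three distinct eigenvalues, namely $r$ and the two roots $\theta\ge 0>\tau$ of $x^{2}-(a-c)x-(r-c)=0$, so that $\theta+\tau=a-c$ and $\theta\tau=c-r$. Because the defining condition that every pair of non-adjacent vertices has $c\ge 1$ common neighbours presupposes that $G$ has non-adjacent vertices, $G$ is not complete and in particular $G\not\cong K_{2}$. Hence Theorem~\ref{LG} forces $2-r$ to be an eigenvalue of $G$; since $2-r=r$ only when $r=1$, we must have $2-r\in\{\theta,\tau\}$, and the two cases are separated by the sign of $2-r$.

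First I would dispose of the case $2-r=\theta$. As $\theta\ge 0$ this gives $r\le 2$, so $G$ is a connected $2$-regular strongly regular graph, that is, a cycle of diameter at most two; the only candidates are $C_{4}$ and $C_{5}$. Imposing $\theta=0$ (equivalently $0\in\operatorname{spec}(G)$) discards $C_{5}$, whose non-trivial eigenvalues are $\tfrac{-1\pm\sqrt5}{2}$, and retains $C_{4}$. Since $L(C_{4})\cong C_{4}$ is distance magic---placing $\{1,4\}$ and $\{2,3\}$ on the two pairs of opposite vertices yields magic constant $5$---this branch supplies the ``if'' direction and produces precisely the graph asserted in the theorem.

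The heart of the argument is the case $2-r=\tau\le-1$, where $2-r$ is the least eigenvalue of $G$ and $r\ge 3$. Substituting $\tau=2-r$ into $\theta\tau=c-r$ gives $\theta=\tfrac{r-c}{r-2}$, and requiring $\theta$ to be a non-negative integer together with the feasibility constraints $1\le c\le r$ and $a=\theta+\tau+c\ge 0$ confines the parameters sharply: one finds $\theta\in\{0,1\}$, forcing $c=r$ or $c=2$, and in the latter case $a=5-r\ge 0$, hence $r\le 5$ (the conference-graph case, with irrational eigenvalues, is checked to contribute nothing). Excluding the complete graph $K_{4}$, which falls outside the definition of a strongly regular graph, this leaves exactly three feasible graphs carrying $2-r$ as an eigenvalue: the octahedron $H_{2,3}$ (with $r=4$, from $c=r$), the $3\times 3$ rook's graph $L_{2}(3)=\operatorname{SRG}(9,4,1,2)$, and the Clebsch graph $\operatorname{SRG}(16,5,0,2)$.

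It remains to show that none of these three has a distance magic line graph. Each line graph is regular of even degree ($6$, $6$, and $8$ on $12$, $18$, and $40$ vertices respectively) and does carry $0$ as an eigenvalue, so the necessary conditions of Theorem~\ref{0} and Propositions~\ref{constant} and \ref{odddeg} are all met and cannot by themselves decide the question; I would therefore settle each case with the labeling search of Section~\ref{algo}. I expect the main obstacle to be twofold: first, establishing the degree bound $r\le 5$ rigorously from the integrality and feasibility conditions, so that the candidate list is provably complete and finite; and second, the computation for the Clebsch graph, whose line graph on $40$ vertices is far beyond the reach of a naive permutation search and would require a heavily pruned backtracking search, or a supplementary structural obstruction, to resolve. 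Once all three line graphs are confirmed to be non-magic, the two branches together establish that $C_{4}$ is the unique strongly regular graph whose line graph is distance magic.
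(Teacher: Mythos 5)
Your proposal is correct and takes essentially the same route as the paper: both arguments push the spectral condition (0 an eigenvalue of $L(G)$, hence $2-r$ an eigenvalue of $G$) through the strongly regular parameter constraints to arrive at the same three surviving candidates --- $K_{2,2,2}$, the $3\times 3$ grid $\mathrm{SRG}(9,4,1,2)$, and the Clebsch graph $\mathrm{SRG}(16,5,0,2)$ --- and then eliminate them by a computational search for distance magic labelings of their line graphs, exactly as the paper reports doing. The only cosmetic difference is that you constrain the parameters via integrality of $\theta=\frac{r-c}{r-2}$ (which forces the conference-graph caveat and quietly skips the non-realizable $\theta=2$ case $(3,2,1)$), whereas the paper works directly with the algebraic relation $(r-2)(r-c+a-2)=r-c$, avoiding integrality and irrational eigenvalues altogether.
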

\begin{proof} Let $G$ be a strongly regular graph of order $n$ with parameters $(r,a,c)$, where $c\geq 1$ and $0 \leq a < r$.
Let the eigen values of $G$ be $r,x_1,x_2$, where $x_1$ and $x_2$ are the roots of $x^2+(c-a)x+(c-r)=0$.
Consequently, the eigen values of $L(G)$ are $2r-2,r-2+x_1,r-2+x_2,-2$. If $L(G)$ is distance magic, then by Theorem \ref{0},
either $2r-2=0, r-2+x_1=0$, or $r-2+x_2=0$.

We shall consider two cases: $r=2$ and $r>2$.

If $r=2$, without loss of generality, $r-2+x_1=0$, and so $x_1=0$, which means $G$ is a complete multipartite graph. Since $x_1 x_2=c-r$ then $c=r=2$, and so $G\approx C_4$, where its line graph, $L(C_4) \approx C_4$ is distance magic.

If $r>2$, without loss of generality, $r-2+x_1=0 $, and $x_1=2-r$. Since $x_1+x_2=-(c-a)$, then $x_2=a-c-2+r$. From $x_1 x_2=c-r$, we obtain
\begin{equation}
(r-2)(r-c+a-2)=r-c
\label{L(DRG)}
\end{equation}
If $r-c+a-2=0$, then $c=r$, and so $a=2$. Therefore, $G$ has parameters $(r,2,r)$. It is well known that if $G$ is a strongly regular graph of order $n$ with $c=r$ then $G \approx H_{(n-r),\frac{n}{n-r}}$ and $a=2r-n$ (see, for instance, \cite{BCN}). Therefore, $G\approx K_{2,2,2}$. If $r-c+a-2 \neq 0$, from (\ref{L(DRG)}), $r-c\neq 0$. Since $r\geq c$ then $r-c+a-2>0$. Applying again equation (\ref{L(DRG)}), we obtain $r-c \geq r-2$, which means $c\leq 2$. The case of $c=1$ leads to $G$ with parameters $(3,2,1)$. On the other hand, the case of $c=2$, leads to $G$ with parameters $(3,2,2), (4,1,2)$, or $(5,0,2)$. Parameters $(3,2,1)$ and $(3,2,2)$ are not possible, while parameters $(4,1,2)$ and $(5,0,2)$ lead to exactly two graphs: the $(3\times3)$-grid on 9 vertices \cite{BH} and the Clebsch graph on 16 vertices \cite{GR}.

Our exhaustive search of distance magic labelings for the line graphs of the three remaining graphs: the complete multipartite graph $K_{2,2,2}$, the $(3\times3)$-grid, and the Clebsch resulted in the negative.
\end{proof}

Similar to the case of distance magic labelings, we could show that line graphs of strongly regular graphs do not admit closed distance magic labelings, except for the trivial case of the cycle on 3 vertices.
\begin{theorem}
Let $G$ be a strongly regular graph. The line graph of $G$ is closed distance magic if and only if $G$ is a cycle of order 3.
\end{theorem}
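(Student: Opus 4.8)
The plan is to mirror the proof of the preceding theorem, but with the eigenvalue condition for distance magicness replaced by the one for closed distance magicness. First I would dispose of the easy direction: if $G\cong C_3$, then $L(G)\cong L(K_3)\cong K_3\cong C_3$, and since the closed neighbourhood of every vertex of $C_3$ is all of $V(C_3)$, every weight equals $1+2+3$, so $L(G)$ is closed distance magic. For the converse, suppose $L(G)$ is closed distance magic. As $L(G)$ is regular, the cited necessary condition of \cite{ACP16} forces $G$ to have $1-r$ as an eigenvalue. Writing the eigenvalues of a strongly regular $G$ with parameters $(r,a,c)$ as $r,x_1,x_2$, where $x_1,x_2$ are the roots of $x^2+(c-a)x+(c-r)=0$, and noting $1-r\neq r$ for $r\geq 1$, I would take $x_1=1-r$ without loss of generality. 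Vieta's formulas then give $x_2=r+a-c-1$ together with the key relation
\begin{equation}
(r-1)(r+a-c-1)=r-c. \label{closedL}
\end{equation}

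The core of the argument is a case analysis on (\ref{closedL}), entirely parallel to the distance-magic case but yielding a stronger restriction. If $r+a-c-1=0$, the left-hand side vanishes, forcing $c=r$ and then $a=1$; but $c=r$ characterises the complete multipartite graphs $H_{m,p}$, for which $a=m(p-2)$, and $a=1$ has no solution with part-size $m\geq 2$, so no non-complete graph survives this branch. If instead $r+a-c-1\neq 0$, then $r>c$ (otherwise the right-hand side vanishes and we return to the first branch), hence $r+a-c-1\geq 1$, and (\ref{closedL}) gives $r-1\leq r-c$, i.e. $c\leq 1$; thus $c=1$, whence $a=3-r$ and $r\leq 3$. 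This leaves exactly the parameter sets $(2,1,1)$ and $(3,0,1)$.

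The set $(2,1,1)$ yields $n=3$ and is the degenerate (complete) graph $K_3=C_3$; I would handle the complete case uniformly by applying the eigenvalue condition directly to $K_n$, where $1-(n-1)\in\{n-1,-1\}$ forces $n=3$, so $C_3$ is the only complete graph passing the test. The set $(3,0,1)$ is that of the Petersen graph, the unique strongly regular graph on $10$ vertices with these parameters, and this is the genuine obstacle. Its line graph has $15$ vertices and is $4$-regular, so the parity requirement ($n+r=19$ is odd) and the integrality of the closed-distance-magic constant $k=\frac{(r+1)(n+1)}{2}=40$ are both met and yield no contradiction; moreover Theorem \ref{CDMSRG} does not apply, since $L(\mathrm{Petersen})$ is not strongly regular (the number of common neighbours of two non-adjacent vertices, i.e. of two disjoint edges of the Petersen graph, is not constant). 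I would therefore eliminate it with the exhaustive search of Section \ref{algo}, adapted to closed distance magic labelings via the corresponding constant and parity propositions, which I expect to return the negative — a computation well within the range already used for the $40$-vertex Clebsch line graph. Combining all branches, the only surviving graph is $C_3$, which completes the characterisation.
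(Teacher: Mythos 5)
Your argument follows the same spectral route as the paper's proof: the eigenvalue condition of \cite{ACP16} applied to $L(G)$, Vieta's formulas for the two non-principal eigenvalues of the strongly regular graph, and a case analysis of the resulting Diophantine identity (your $(r-1)(r+a-c-1)=r-c$ is the paper's $(r-1)(c+2-r-a)=c-1$ in disguise: substitute $c+2-r-a=1-(r+a-c-1)$ and simplify). Both analyses land on the same candidates: the complete multipartite branch, which yields only $C_3$, and the parameter sets $(2,1,1)$ and $(3,0,1)$. The place where you part ways with the paper is precisely the place where the paper errs: the paper dismisses both $(2,1,1)$ and $(3,0,1)$ as ``not realisable,'' but $(3,0,1)$ \emph{is} realised --- uniquely --- by the Petersen graph, whose spectrum $\{3,1^{5},(-2)^{4}\}$ contains $1-r=-2$. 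Consequently $L(\mathrm{Petersen})$ (15 vertices, $4$-regular, with $-1$ in its spectrum, and passing the parity and magic-constant tests, as you checked) survives every necessary condition invoked in the paper, and it must be excluded by a separate argument. Your recognition of this case as the genuine obstacle corrects a gap in the published proof; your handling of the complete graphs and of the multipartite branch is also more careful than the paper's, whose $c\geq 2$ case concludes with $C_3$ even though $C_3$ is complete and formally outside that case.

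The one incomplete step in your proposal is that you leave the elimination of $L(\mathrm{Petersen})$ to an exhaustive search you have not run, so as written your proof is conditional on that computation. The search would indeed return the negative, and in fact it can be replaced by a short exact argument which I recommend you include. Identify the Petersen graph $P$ with the Kneser graph $K(5,2)$: each edge of $P$ is a pair of disjoint $2$-subsets of $\{1,\dots,5\}$ and therefore ``misses'' exactly one point $m$, and each point is missed by exactly three edges. A closed distance magic labeling of $L(P)$ is a bijective solution of $(A_{L(P)}+I)\mathbf{f}=40\cdot\mathbf{1}$. Since $8\cdot\mathbf{1}$ is one solution, every solution has the form $8\cdot\mathbf{1}+\mathbf{w}$ with $\mathbf{w}$ in the $(-1)$-eigenspace of $L(P)$. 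Because $-1\neq -2$, that eigenspace is exactly $B^{T}E_{-2}(P)$, where $B$ is the vertex-edge incidence matrix of $P$ and $E_{-2}(P)$ is the $(-2)$-eigenspace of $P$, which consists of the vectors $u_{\{i,j\}}=x_i+x_j$ with $\sum_{m}x_m=0$. Hence on the edge $e=\{\{i,j\},\{k,l\}\}$ missing the point $m$,
\[
f_e \;=\; 8+(x_i+x_j)+(x_k+x_l)\;=\;8-x_m ,
\]
so every admissible $\mathbf{f}$ is constant on the three edges missing $m$. Its fifteen entries therefore occur in five triples of equal values and can never form a bijection onto $\{1,\dots,15\}$; thus $L(\mathrm{Petersen})$ is not closed distance magic. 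With this argument inserted in place of the expected computation, your proof is complete --- and, unlike the paper's, it is actually airtight at the $(3,0,1)$ case.
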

\begin{proof}
Let $G$ be a strongly regular graph of order $n$ with parameters $(r,a,c)$, where $c\geq 1$ and $0 \leq a < r$. Let the eigen values of $G$ be $r,x_1,x_2$, where $x_1$ and $x_2$ are the roots of
\begin{equation} \label{SRGeigeneq}
x^2+(c-a)x+(c-r)=0.
\end{equation}
Consequently, the eigen values of $L(G)$ are $2r-2,r-2+x_1,r-2+x_2,-2$. If $L(G)$ is closed distance magic, then by Theorem \ref{-1}, either $2r-2=-1$ (impossible), $r-2+x_1=-1$, or $r-2+x_2=-1$. Without loss of generality, let $r-2+x_1=-1$, and so $x_1=1-r$. Substituting into (\ref{SRGeigeneq}), we obtain
$(1-r)^2+(c-a)(1-r)+(c-r)=0$. Adding $(1-c)$ to both sides results in $(1-r)^2+(c-a)(1-r)+(1-r)=1-c$, and so
\begin{equation}
(r-1)(c+2-r-a)=c-1.
\label{CDM}
\end{equation}

Now we consider two cases: $c=1$ and $c\geq 2$. If $c=1$, then $(r-1)(3-r-a)=0$. Since $r\geq 2$, then $r+a=3$. Thus the possible parameters of $G$ are $(2,1,1)$ or $(3,0,1)$; both of which are not realisable.

If $c\geq 2$, then $c+2-r-a\geq 1$. From (\ref{CDM}), $r-1 \leq c-1$ or $r\leq c$, and we have $r=c$, which means $G$ is a regular complete multipartite graph. Substituting $r=c$ into Equation (\ref{CDM}) results in $a=1$, and the fact that $G$ is a regular complete multipartite graphs leads that $G\approx C_3$ which obviously is closed distance magic.
\end{proof}

The results in the last two theorems provide the following characterization of strongly regular graphs whose line graphs are $D$-magic.
\begin{theorem}
Let $G$ be a strongly regular graph. The line graph of $G$, $L(G)$, is $D$-magic if and only if
\begin{enumerate}
  \item $D=\{0,1\}$ and $G \approx C_3$, or
  \item $D=\{0,2\}$ and $G \approx C_4$, or
  \item $D=\{0,1,2\}$, or
  \item $D=\{1\}$ and $G \approx C_4$, or
  \item $D=\{2\}$ and $G \approx C_3$.
\end{enumerate}
\end{theorem}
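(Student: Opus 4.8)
The plan is to obtain this statement as a corollary of the two preceding theorems, using the complementation observation from the Introduction that a graph is $D$-magic if and only if it is $D^*$-magic, where $D^*=\{0,1,\ldots,d\}\setminus D$ and $d$ is the diameter of the graph. We have already determined exactly when $L(G)$ is $\{1\}$-magic (distance magic, forcing $G\approx C_4$) and when it is $\{0,1\}$-magic (closed distance magic, forcing $G\approx C_3$), so the task reduces to enumerating the candidate distance sets $D$ and reducing each, by complementation, either to one of these two settled cases or to a trivial one.

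First I would clear away the degenerate distance sets. Taking $D$ to be the full distance set $\{0,1,\ldots,d\}$ gives $N_D(x)=V(L(G))$ for every $x$, so each weight equals $\sum_y f(y)=\binom{n+1}{2}$ and $L(G)$ is $D$-magic for every $G$; this is case~3 (the set $\{0,1,2\}$ being the full set precisely when the diameter is at most $2$), and its complement $D=\emptyset$ (all weights $0$) is discarded as trivial. The singleton $D=\{0\}$ gives $w(x)=f(x)$, a bijection and hence non-constant once $L(G)$ has more than one vertex, so neither $\{0\}$ nor, by complementation, $\{1,2\}$ can ever be magic; these are exactly the distance sets omitted from the list.

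It then remains to examine the two complementary pairs $\{\{1\},\{0,2\}\}$ and $\{\{0,1\},\{2\}\}$. For the first, the just-proved characterization of distance magic line graphs gives $\{1\}$-magic $\Leftrightarrow G\approx C_4$; since $L(C_4)\approx C_4$ has diameter $2$, the complement of $\{1\}$ is exactly $\{0,2\}$, yielding cases~4 and~2. For the second, the characterization of closed distance magic line graphs gives $\{0,1\}$-magic $\Leftrightarrow G\approx C_3$, which is case~1; the matching case~5 follows because in $L(C_3)\approx K_3$ there are no vertices at distance $2$, so the $\{2\}$-weight of every vertex is $0$ and $K_3$ is (degenerately) $\{2\}$-magic.

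The step I expect to demand the most care is the bookkeeping built into the complementation principle, which is taken relative to the \emph{true} diameter of the graph. The two surviving graphs behave differently, $L(C_3)\approx K_3$ having diameter $1$ and $L(C_4)\approx C_4$ diameter $2$, so the symbol ``$\{2\}$'' denotes a genuine distance class for $C_4$ but an empty one for $K_3$. More substantively, the line graph of a strongly regular graph may a priori have diameter $3$, so for a fully rigorous ``only if'' direction I would confirm that $L(G)$ being $D$-magic already forces $G\in\{C_3,C_4\}$ --- graphs whose line graphs have diameter at most $2$ --- so that no distance set outside $\{0,1,2\}$ can intervene and the enumeration above is genuinely exhaustive.
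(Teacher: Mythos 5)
Your proposal matches the paper's approach exactly: the paper offers no explicit proof of this theorem, merely stating that it ``follows from the results in the last two theorems'' --- i.e.\ the characterizations $L(G)$ distance magic $\Leftrightarrow G \approx C_4$ and $L(G)$ closed distance magic $\Leftrightarrow G \approx C_3$, combined with the complementation principle $D \leftrightarrow D^*$ from the Introduction and the disposal of the trivial sets $\emptyset$, $\{0\}$, $\{1,2\}$, $\{0,1,2\}$ --- which is precisely the enumeration you carry out. If anything you are more careful than the paper: your closing observation that $L(G)$ may a priori have diameter $3$ (so that the complement of $\{1\}$ need not be $\{0,2\}$, and the exhaustiveness of the case analysis needs justification) flags a genuine subtlety that the paper passes over in silence.
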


\section{Distance Magic Labeling for Distance Regular Graphs of Diameter 3} \label{diam3}

We shall conclude by presenting necessary conditions for the existence of distance magic labelings for distance regular graphs of larger diameter, in particular of diameter 3.

\begin{lemma} \label{det}
Let $G$ be a distance regular graph with diameter $d$ and intersection array $$\{r,b_1,\ldots,b_{d-1};1,c_2,\ldots,c_d\}.$$
If $G$ is distance magic then
\[det \left(
\begin{array}{rrrrr}
a_2 & c_3 &     &    &0\\
b_2 & a_3 & c_4 &    &\\
& b_3 & a_4 &  \ddots  &\\	
&     &     &  \ddots & c_d\\
0     &     &     &    & a_d
\end{array}
\right)=0,\]
where $a_i=r-b_i-c_i, i\in \{1,2,\ldots,d\}$.
\end{lemma}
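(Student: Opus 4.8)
The plan is to combine the eigenvalue criterion of Theorem \ref{0} with the standard spectral theory of distance-regular graphs. Since $G$ is regular and distance magic, Theorem \ref{0} guarantees that $0$ is an eigenvalue of $G$. For a distance-regular graph, the distinct eigenvalues of the adjacency matrix coincide exactly with the eigenvalues of its $(d+1)\times(d+1)$ tridiagonal intersection matrix
\[
B=\left(\begin{array}{rrrrr}
a_0 & c_1 &     &        &        \\
b_0 & a_1 & c_2 &        &        \\
    & b_1 & a_2 & \ddots &        \\
    &     & \ddots & \ddots & c_d \\
    &     &     & b_{d-1} & a_d
\end{array}\right)
\]
(see, e.g., \cite{BCN}), where $a_i=r-b_i-c_i$, together with $b_0=r$, $c_1=1$, and $a_0=r-b_0-c_0=0$. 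Hence my first step is to record that $0$ being an eigenvalue of $G$ is equivalent to $\det B=0$.

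The core of the argument is then a purely linear-algebraic reduction of $\det B$ to the smaller determinant in the statement. First I would expand $\det B$ along its first row. Because $a_0=0$ and the only other nonzero entry in that row is $c_1=1$, the expansion collapses to a single term, $\det B=-M$, where $M$ is the minor obtained by deleting the first row and the second column of $B$. Next I would expand $M$ along its first column: after that deletion the surviving first column of $M$ (the old column indexed $0$) has $b_0=r$ as its only nonzero entry, so a second cofactor expansion yields $M=r\cdot\det B'$, where $B'$ is precisely the principal submatrix of $B$ on indices $2,\dots,d$, that is, the matrix displayed in the lemma. Combining the two expansions gives $\det B=-r\,\det B'$.

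Finally, since $G$ has positive degree we have $r\neq 0$, and since $\det B=0$ we conclude $\det B'=0$, which is exactly the claimed identity. The only delicate point in carrying this out is the bookkeeping of cofactor signs and the verification that the two successive deletions leave behind exactly the index-$2$ through index-$d$ block, with the off-diagonal entries $b_i$ and $c_{i+1}$ landing in their correct tridiagonal positions. The sparsity of the tridiagonal pattern makes both expansions single-term, so no genuine computation is required beyond confirming this structure.
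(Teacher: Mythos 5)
Your proof is correct and follows essentially the same route as the paper: invoke Theorem \ref{0} to get $0$ as an eigenvalue of $G$, pass to the tridiagonal intersection matrix whose eigenvalues are precisely the distinct eigenvalues of $G$, and conclude that a determinant vanishes. In fact, your write-up is more complete than the paper's own proof: the paper stops at $\det(\texttt{T}-0\cdot I)=0$ for the full $(d+1)\times(d+1)$ intersection matrix and declares the proof finished, leaving entirely implicit the reduction from that statement to the vanishing of the $(d-1)\times(d-1)$ determinant that actually appears in the lemma. Your two single-term cofactor expansions (along the first row, using $a_0=0$ and $c_1=1$, then along the first column of the resulting minor, using $b_0=r$) supply exactly this missing step, yielding $\det B=-r\det B'$ and hence $\det B'=0$ since $r\neq 0$; this is the right way to close the gap, and your identification of the surviving block as the principal submatrix on indices $2,\dots,d$ is accurate.
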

\begin{proof}
The eigen values of $G$ are also the eigen values of the tridiagonal matrix
\[\texttt{T}= \left(
	\begin{array}{rrrrrrrr}
	0 & 1  &      &     &          &    0\\
	r & a_1 & c_2 &     &          &     \\
	  & b_1 & a_2 & c_3 &          &      \\
 	  &     & b_2 & a_3 & \ddots   &      \\
	  &     &     &     & \ddots   &   c_d   \\
   	0 &     &     &     &          &   a_d   \\
	\end{array}
	\right),\]
where $a_i=r-b_i-c_i, i \in \{1,2,3,\ldots,d\}$.
Since $0$ is an eigen value of $G$ then $det(\texttt{T}-0\cdot I)=0$, where $I$ is the identity matrix. This completes the proof.
\end{proof}

\begin{theorem}
Let $G$ be a distance-regular graph of diameter 3 with intersection array $\{r,b_1,b_2;1,c_2,c_3\}$.
If $G$ is distance magic then $G$ is primitive and $(r-b_2-c_2)(k-c_3)=b_2 c_3$.
\end{theorem}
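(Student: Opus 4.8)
The plan is to extract both conclusions from the single spectral fact that $0$ is an eigenvalue of $G$ (Theorem \ref{0}), fed through the determinantal criterion of Lemma \ref{det}; the magic equation comes out immediately, and primitivity then follows by playing that equation off against the structure theory of imprimitive distance-regular graphs.

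First I would specialize Lemma \ref{det} to $d=3$. The displayed $(d-1)\times(d-1)$ matrix collapses to a $2\times2$ block, so the criterion reads
\[
\det\begin{pmatrix} a_2 & c_3 \\ b_2 & a_3 \end{pmatrix}=a_2a_3-b_2c_3=0,
\]
i.e.\ $a_2a_3=b_2c_3$. Substituting $a_2=r-b_2-c_2$ and, since $b_3=0$ for a diameter-$3$ graph, $a_3=r-c_3$, I obtain $(r-b_2-c_2)(r-c_3)=b_2c_3$, which is precisely the asserted equation (the symbol $k$ there standing for the valency $r$). This settles the second claim with no further work.

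For primitivity I would argue by contradiction from this same equation. Every intersection number satisfies $a_i\ge 0$, while $b_2=b_{d-1}>0$ and $c_3=c_d>0$ by the definition of a distance-regular graph; hence $a_2a_3=b_2c_3>0$, forcing $a_3\neq 0$. On the other hand, by the classification of imprimitive distance-regular graphs of diameter $\ge 2$ (see \cite{BCN}), an imprimitive such graph is either bipartite or antipodal. If $G$ is bipartite then $a_i=0$ for all $i$, so $a_3=0$; and if $G$ is antipodal of diameter $3$ then $c_3=r$, so again $a_3=r-c_3=0$. Either outcome contradicts $a_3\neq 0$, whence $G$ is primitive.

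The determinant expansion and the substitution of the $a_i$ are routine. The main obstacle is supplying the structural input for the imprimitive case accurately: I must invoke that imprimitivity in diameter $\ge 2$ forces the bipartite-or-antipodal dichotomy, and that every antipodal distance-regular graph of diameter $3$ has $c_3=r$ (so that $a_3=0$). With those standard facts cited, the contradiction against $a_3\neq 0$ closes the argument; the only delicate point is the positivity bookkeeping ($a_i\ge 0$ together with $b_2,c_3>0$) that converts the magic equation into the statement $a_3\neq 0$.
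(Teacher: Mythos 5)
Your proposal is correct and follows essentially the same route as the paper: specialize Lemma \ref{det} to $d=3$, substitute $a_2=r-b_2-c_2$ and $a_3=r-c_3$ to get the equation (with $k$ indeed a typo for the valency $r$), and rule out imprimitivity via the bipartite/antipodal dichotomy, since either case forces $a_3=0$ (equivalently $c_3=r$), contradicting $a_2a_3=b_2c_3>0$. Your write-up is in fact slightly more careful than the paper's, which compresses the imprimitive case into a single citation of Biggs for $c_3=r$ and leaves the bipartite-or-antipodal classification implicit.
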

\begin{proof}
Clearly, the following two equations hold for the parameters of the intersection array:
$$r=b_2+c_2+a_2,$$
$$r=c_3+a_3.$$

On the other hand, since $G$ is distance magic then by Lemma \ref{det},
$$det \left(
\begin{array}{rrrrrrrr}
a_2 & c_3 \\
b_2 & a_3 \\
\end{array}
\right)=0,$$
and so
\begin{equation} \label{b2c3}
(r-b_2-c_2)(k-c_3)=b_2 c_3.
\end{equation}

If $G$ is either antipodal or bipartite, then $c_3=r$ (see \cite{Bi}), which leads to a contradiction to the positivity of both $b_2$ and $c_3$.
\end{proof}

Referring to the BCN tables of feasible intersection arrays for distance-regular graphs on at most 4096 vertices \cite{BCNtable}; from 105 feasible intersection arrays for primitive graphs of diameter 3 and even degree, only 13 arrays fullfiling Equation (\ref{b2c3}). The feasible intersection arrays are listed in Table \ref{13}.

\begin{table}[h] \label{13}
\begin{center}
\begin{tabular}{c|c|c|c}
  \textbf{No} & \textbf{Intersection array} & \textbf{Order} & \textbf{Graph} \\
  \hline
  1 & \{6,4,2;1,2,3\} & 27 & Hamming graph $H(3,3)$ \\
  2 & \{10,6,4;1,2,5\} & 65 & Hall graph from $P\Sigma L(2,25)$ \\
  3 & \{12,6,2;1,4,9\} & 35 & Johnson graph $J(7,3)$ \\
  4 & \{12,10,3;1,3,8\} & 68 & Doro graph from $P\Sigma L(2,16)$ \\
  5 & \{30,22,9,1,3,20\} & 350 & unknown  \\
  6 & \{36,25,16;1,4,18\} & 462 & unknown \\
  7 & \{40,33,8;1,8,30\} & 250 & unknown \\
  8 & \{42,30,12;1,6,28\} & 343 & unknown \\
  9 & \{60,42,18;1,6,40\} & 670 & unknown \\
  10 & \{60,45,8;1,12,50\} & 322 & unknown \\
  11 & \{60,52,10;1,10,48\} & 438 & unknown \\
  12 & \{72,45,16;1,8,54\} & 598 & unknown \\
  13 & \{72,70,8;1,8,63\} & 783 & unknown \\
  \hline
\end{tabular}
\caption{Feasible intersection arrays for primitive graphs of diameter 3 and even degree with $(r-b_2-c_2)(k-c_3)=b_2 c_3$.}
\end{center}
\end{table}

Applying a naive exhaustive search to graphs in Table \ref{13} is somewhat not feasible, and so it would be nice to have a more efficient algorithm; a pruned exhaustive search might be one possible approach.

To conclude, the problem of finding all distance regular graphs which are $D$-magic is still wildly open. The result for a special case of distance regular graphs, the hypercubes, can be found in \cite{AMS}.

\section*{References}


\end{document}